\newtheorem{thrm}{Theorem}[section]
\newtheorem{lemma}[thrm]{Lemma}
\newtheorem{cor}[thrm]{Corollary}
\theoremstyle{definition}
\newtheorem{definition}[thrm]{Definition}
\newtheorem{example}[thrm]{Example}
\numberwithin{equation}{section}
\newcommand{\A}{\ensuremath{\mathbb{A}}}
\newcommand{\s}{\ensuremath{\mathbb{S}}}
\newcommand{\U}{\ensuremath{\mathbb{U}}}
\newcommand{\maintain}{\text{Maintain}}
\newcommand{\proviso}{\text{Proviso}}
\newcommand{\GL}{{G^\mathcal{L}}}
\newcommand{\GR}{{G^\mathcal{R}}}
\newcommand{\HL}{H^\mathcal{L}}
\newcommand{\HR}{H^\mathcal{R}}
\newcommand{\Conj}[1]{\overset{\leftrightarrow}{#1}}
\newcommand{\pura}[2]{\langle\emptyset^{#1}\!\mid \!\emptyset^{#2}\rangle}
\newcommand{\cg}[2]{\left\langle #1\!\mid \!#2\right\rangle}
\newcommand{\cgs}[2]{\langle #1\!\mid \!#2\rangle}
\newcommand{\atom}[1]{\emptyset^{#1}}
\newcommand{\mup}{\curlywedgeuparrow}
\newcommand\mown{{\curlyveedownarrow}}
\renewcommand{\ge}{\geqslant}
\newcommand{\su}{\succcurlyeq}
\newcommand{\adr}[1]{\boldsymbol #1}
\newcommand{\Abs}{Absolute Universe}
\author{Urban Larsson}
\address{Dalhousie University, Canada}
\email{urban031@gmail.com}
\author{Richard J. Nowakowski}
\address{Dalhousie University, Canada}
\email{r.nowakowski@dal.ca}
\author{ Carlos P. Santos}
\address{Universidade de Lisboa, Campo Grande, Portugal}
\email{cmfsantos@fc.ul.pt}
\keywords{Keywords: Absolute Universe, Category theory, Combinatorial game space, Dicot mis\`ere, Game comparison}
\subjclass{Primary 91A46, Secondary 18B99}
\begin{document}
\title[When game comparison becomes play]{When game comparison becomes play:\\ Absolutely Categorical Game Theory}
\begin{abstract}
Absolute Universes of combinatorial games, as defined in a recent paper by the same authors, include many standard short normal- mis\`ere- and scoring-play monoids. In this note we show that the class is categorical, by extending Joyal's construction of arrows in normal-play games. Given $G$ and $H$ in an Absolute Universe $\U$, we study instead the Left Provisonal Game $[G, H]$, which is a normal-play game, independently of the particular Absolute Universe, and find that $G\longrightarrow H$ (implying $G\su H$) corresponds to the set of winning strategies for Left playing second in $[G,H]$. By this we define the category ${\bf LNP(\U)}$.
\end{abstract}

\maketitle

\section{Introduction}
This is the study of game comparison in Combinatorial Game Theory (CGT), 
specifically, Combinatorial Game Spaces, and their sub spaces (universes of games). 
The concept of a Combinatorial Game Space allows for a general frame work, which includes many standard classes of terminating games.
 One of the most elegant discoveries of normal-play  CGT, \cite{BerleCG2001--2004}, is that Left wins playing second in the game 
$G$ if and only if $G\geq 0$. Since normal-play games constitute a group structure, this leads to a constructive (subordinate) general 
game comparison, $G\geq H$ if and only if Left wins the game $G - H$ playing second. Joyal \cite{Joyal1977} proved that games, 
under the normal-play convention, form a category 
where $H\longrightarrow G$ if Left wins playing second in $G-H$. That is, Left has good replies 
against  any Right moves $G^R-H$ and $G-H^L$ and so forth. 

More generally, for any winning convention in CGT, game comparison is axiomatized by: Left prefers $G$ to $H$ if, for all games $X$, Left does at least as well in $G+X$ as in $H+X$. Each different winning convention, possibly coupled with other constraints gives a different partial order. 

The authors recently demonstrated \cite{LNS} that there is a set of properties that define \emph{Absolute Universes} and together these properties reduce game comparisons to considering only a certain \emph{Proviso}, and a \emph{Common Normal Part} (corresponding to Theorem~\ref{thrm:ao} in this paper). 
 Except for normal-play, typically Absolute Universes only have a monoid structure (group structure is not common in scoring-play and non-existent in mis\`ere-play), so we cannot use the `inverse' of any game freely.
 It is generally believed that game comparison in normal-play is a special case, which does not apply to other monoids of combinatorial games. 
 
 Here, we construct a normal-play game, called the \emph{Left Provisonal Game}, $[G,H]$ which is essentially playing $G-H$ (as if $H$ were invertible) but where Left's options are restricted by the Proviso, and where the games $G$ and $H$ belong to any \Abs. The previous work \cite{LNS} implies that in any Absolute Universe, the games $G$ and $H$ satisfy $G\su H$ if and only if Left wins the normal-play game $[G, H]$ whenever Right starts (Theorem \ref{thm:dng}). This allows for a construction of arrows, similar to Joyal's, which shows that Absolute Universes are categorical. 

We give the relevant background on Absolute Combinatorial Game Theory \cite{LNS} in Appendix A at the end of this paper. Appendix B contains code for CGsuite 0.7, which `compares' mis\`ere dicot games by, instead, analyzing the Left Provisonal Game. 

\section{Absolute game comparison and the Left Provisional Game }
First we recall the Proviso for a pair of games in a given \Abs\ \cite{LNS}, and we remind the reader that relevant background on outcomes, left-atomic games and so on, is also given in Appendix A.

\begin{definition}[Proviso]\label{def:proviso} Consider an \Abs\ $\U$, and let $G,H\in \U$. The ordered pair of games $[G,H] \in \proviso (\U)$ if
\begin{enumerate}[]
 \item $o_L(G+X)\geqslant o_L(H+X)$ for all left-atomic games $X\in \U$; 
 \item $o_R(G+X)\geqslant o_R(H+X)$ for all right-atomic games $X\in \U$.
\end{enumerate}
\end{definition}
From now onwards, pairs of games in an \Abs\ will combine to another (normal-play) game. 
\begin{definition}[Left Provisonal Game]\label{def:dng} 
Let $\U$ be an Absolute Universe. The \emph{Left Provisonal Game} (LPG) is defined on $\U\times \U$ as follows.
\begin{enumerate}
  \item The positions are ordered pairs of games $[ G, H ]\in \U\times \U$;
  \item The Left options of $[G,H]$ are of the form:
  \begin{enumerate}
    \item $[G^L,H]\in \proviso(\U)$;
    \item $[G,H^R]\in \proviso(\U)$.
  \end{enumerate}
  \item The Right options are games of the form $[G^R,H]$ or $[G,H^L]$;
  \item A player who cannot move loses.
\end{enumerate}
\end{definition}
That is Right cannot move and loses playing first if both $G^\mathcal R$ and $H^\mathcal L$ are empty. 
For Left the situation is more intricate. If for all $G^L$, $[G^L,H]\not\in \proviso(\U)$ and for all $H^R$, $[G,H^R]\not\in \proviso(\U)$, then Left cannot move and loses. Thus, Left Provisonal Game $[G,H]$ is in fact a normal-play game, regardless of $\U$. Using the standard notation of normal-play, thus 
$[G,H]=\{[G,H]^\mathcal L \;|\; [G,H]^\mathcal R\} = \{[G^L,H]\in \proviso(\U), [G,H^R]\in \proviso(\U) \;|\; [G^R,H],[G,H^L]\}$, where $G^L$ ranges over all $G$'s Left options, $H^R$ ranges over $H$'s all Right options, etc.

\begin{definition}[Left's Maintenance]\label{def:common}
Consider an \Abs\ $\U$, and let $G,H\in \U$. The Left Provisional Game  $[ G,H] \in \maintain (\U)\subset \U\times \U$ if, for all Right options $[ G,H] ^R\in [ G,H] ^\mathcal R$, there is a Left option $[ G,H] ^{RL}$, such that  $[ G,H] ^{RL}\in \maintain (\U)$.
\end{definition}

Let us recall the main theorem for comparing games in an Absolute Universe, now stated as an equivalence involving Left Provisonal Games 
(see also Appendix~A).

\begin{thrm}[Basic  order of CGT, \cite{LNS}]\label{thrm:ao}
 Consider an \Abs\ $\U$ and let $G,H\in \U$. Then $G\su H$ if and only if Left Provisonal Game $[G,H]\in \proviso(\U)\cap \maintain(\U).$
\end{thrm}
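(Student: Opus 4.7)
The plan is to recognize that Theorem \ref{thrm:ao} is a translation of the main comparison theorem of \cite{LNS} into the language of the Left Provisonal Game, and to verify that the two conditions on $[G,H]$ match the two ingredients of game comparison in an \Abs\ one-for-one.

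First I would observe that, by Definition \ref{def:proviso}, the clause $[G,H] \in \proviso(\U)$ is literally the atomic test condition appearing in the \cite{LNS} characterization of $G \su H$. Hence the substantive content of the theorem reduces to proving that, \emph{given} $[G,H] \in \proviso(\U)$, the comparison $G \su H$ is equivalent to $[G,H] \in \maintain(\U)$.

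Next, I would argue this equivalence by simultaneous induction on the pair of formal birthdays of $G$ and $H$. The subordinate/inductive part of the \cite{LNS} characterization of $\su$ requires, for every $G^R$, the existence of either $H^R$ with $G^R \su H^R$ or $G^{RL}$ with $G^{RL} \su H$; and symmetrically, for every $H^L$, the existence of either $G^L$ with $G^L \su H^L$ or $H^{LR}$ with $G \su H^{LR}$. But by Definition \ref{def:dng} the Right options of $[G,H]$ in the LPG are exactly $[G^R, H]$ and $[G, H^L]$, whose candidate Left responses are $[G^{RL}, H]$ or $[G^R, H^R]$, and $[G^L, H^L]$ or $[G, H^{LR}]$, respectively. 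Applying the inductive hypothesis at each such successor position, being in $\proviso(\U) \cap \maintain(\U)$ there is equivalent to the corresponding $\su$ relation at that position. Unrolling Definition \ref{def:common}, the statement that $[G,H] \in \maintain(\U)$ therefore says exactly that every Right move in the LPG admits one of the good Left replies demanded by the subordinate comparison clause in \cite{LNS}; this yields both implications at once.

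The main subtlety I expect is a bookkeeping issue: by Definition \ref{def:dng} a Left option of the LPG carries a Proviso side condition built into its very definition, so to identify the Left-option structure of the LPG with the abstract choice of good replies in the subordinate comparison clause, the induction must supply, at each successor, not only the underlying $\su$ but also the Proviso certificate. This is exactly what the compound property $\proviso(\U) \cap \maintain(\U)$ transports down the game tree, so the side condition is preserved automatically at every recursive step. Well-foundedness of the induction is immediate from shortness of games in $\U$.
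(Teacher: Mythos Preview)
Your proposal is correct. Note, however, that the paper does not supply a proof of Theorem~\ref{thrm:ao}: it is stated with a citation to \cite{LNS} and presented as a direct reformulation, in the language of the Left Provisonal Game, of the main comparison theorem reproduced in Appendix~A. The paper treats the identification of the Common Normal Part with $\maintain(\U)$ (and of the atomic test with $\proviso(\U)$) as evident from the definitions. Your inductive argument is precisely the explicit verification that this identification holds---including the bookkeeping point that Left options of the LPG already carry the Proviso certificate, so that ``$[G,H]^{RL}\in\maintain(\U)$'' in Definition~\ref{def:common} is automatically ``$[G,H]^{RL}\in\proviso(\U)\cap\maintain(\U)$'' and hence, by induction, the corresponding $\su$ relation. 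So your approach is consistent with, and indeed spells out, what the paper leaves implicit.
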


Analogously:

\begin{thrm}\label{thm:dng}
Let $G,H$ be games in an \Abs\ $\U$. Then $G\su H$ if and only if $[G,H]\in \proviso (\U)$ and the Left Provisonal Game $[G,H]\geq 0$.

\end{thrm}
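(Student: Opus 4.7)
The plan is to derive Theorem~\ref{thm:dng} directly from Theorem~\ref{thrm:ao} by showing that, on the Left Provisonal Game, Left's Maintenance coincides with the usual normal-play inequality $[G,H]\ge 0$. Since Theorem~\ref{thrm:ao} already gives $G\su H \iff [G,H]\in\proviso(\U)\cap\maintain(\U)$, and the $\proviso(\U)$ clause is common to both sides of what we wish to prove, it suffices to establish the following equivalence for all $G,H\in\U$:
\[
  [G,H]\in\maintain(\U) \iff [G,H]\ge 0.
\]

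I would prove this equivalence by structural induction on $[G,H]$, using as well-founded measure (say) the sum of the birthdays of $G$ and $H$, which is finite since games in $\U$ are short. Unfold the two sides in parallel. By Definition~\ref{def:common}, $[G,H]\in\maintain(\U)$ iff for every Right option $[G,H]^R$ of the LPG there is a Left option $[G,H]^{RL}$ of the LPG with $[G,H]^{RL}\in\maintain(\U)$. By the standard normal-play characterization, $[G,H]\ge 0$ iff for every Right option $[G,H]^R$ of the LPG there is a Left option $[G,H]^{RL}$ of the LPG with $[G,H]^{RL}\ge 0$. These two clauses are \emph{syntactically identical} except for the predicate applied to $[G,H]^{RL}$, and they quantify over precisely the same option sets; the inductive hypothesis converts one predicate into the other. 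The base case, when $[G,H]^{\mathcal R}=\emptyset$, is immediate: the maintenance clause is vacuously true, and Right loses moving first, so $[G,H]\ge 0$.

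The only point requiring care---though not a genuine obstacle---is to keep the proviso bookkeeping consistent at interior nodes. A Right move $[G^R,H]$ or $[G,H^L]$ is always available in the LPG, while a candidate Left reply $[G^{RL},H]$ or $[G^R,H^R]$ is available only when it lies in $\proviso(\U)$. Because both the maintenance clause and the normal-play clause for $\ge 0$ range over exactly the same restricted Left-option set, no extra proviso check is needed on subgames and the induction carries through unchanged. Combining the equivalence with Theorem~\ref{thrm:ao} then yields $G\su H \iff [G,H]\in\proviso(\U)$ and $[G,H]\ge 0$, which is the statement of Theorem~\ref{thm:dng}.
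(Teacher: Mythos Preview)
Your proposal is correct and follows essentially the same route as the paper: reduce via Theorem~\ref{thrm:ao} to the equivalence $[G,H]\in\maintain(\U)\iff [G,H]\ge 0$, and then observe that the recursive clause defining $\maintain$ is literally the normal-play characterization of ``$\ge 0$'' on the LPG. The paper states this observation in a single sentence, whereas you spell out the underlying structural induction and the proviso bookkeeping explicitly, but the argument is the same.
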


\begin{proof}
By Theorem~\ref{thrm:ao}, it suffices to prove that $[G,H]\geq 0$ is equivalent with $[G,H]\in \maintain(\U)$. This follows precisely because the inequality means Left wins playing second in normal-play, which is Defininition~\ref{def:dng} (3) combined with the definition of $\maintain(\U)$.
\end{proof}

To the authors' knowledge, in each studied Absolute Universe, $\proviso(\U)$ is constructive, in the sense that the condition in Definition~\ref{def:proviso} can be simplified to compare only (variations of) the outcome of the actual games $G$ and $H$, omitting the potentially infinite class of atomic distinguishing games $X$. For example, in the universe of dicot mis\`ere-play games,  $\proviso(\U)=\{ [G,H] : o(G)\ge o(H)\}$\footnote{Where $o(X)=(o_L(X),o_R(X))\in \{(-1,-1)=R,(-1,+1)=P,(+1,-1)=N,(+1,+1)=L\}$,  $X\in \U$, inducing a partial order of outcomes.}. 

\begin{example}\label{ex:1}
The Proviso simplifies to $o(G)\ge o(H)$ in dicot mis\`ere play, since the only atomic games are the purely atomic ones. 
Take $\U$ as the dicot mis\`{e}re universe and let $G=\mup=\langle0,*\,|\,*\rangle$ 
(``mup'', that means ``mis\`{e}re up'', the simplest dicotic game strictly larger than zero) and $H=0$. 
In the Left Provisional Game  $[ \mup,0] $, Left cannot move to $[ *,0] $, because $P=o(*)\not\ge_\U o(0)=N$ gives that the Proviso is not satisfied. The game tree of the Left Provisonal Game position $[ \mup,0]$ is given in Figure~\ref{fig:gmtree}.
This shows that $[G,H] =\, \uparrow\, > 0$ and thus $G\su H$. Still in dicot mis\`ere, the Left Provisional Game  $[\uparrow , 0]\not \geq 0$, because $R=o(\uparrow)\not \ge o(0)=N$, by the Proviso.

\begin{figure}
\begin{center}
\psset{xunit=0.8cm,yunit=0.8cm,algebraic=true,dotstyle=o,dotsize=2pt 0,linewidth=0.7pt,arrowsize=3pt 2,arrowinset=0.2}
\begin{pspicture*}(1,0.5)(7,6)
\rput[tl](2.68,5.75){$[\mup,0]$}
\psline(3.12,5.08)(1.62,3.32)
\psline(3.12,5.08)(4.62,3.32)
\psline(4.62,3.32)(6.12,1.56)
\psline(4.62,3.1)(3.12,1.56)
\rput[tl](1.2,3.18){$[0,0]$}
\rput[tl](4.85,3.62){$[*,0]$}
\rput[tl](2.74,1.54){$[0,0]$}
\rput[tl](5.98,1.54){$[0,0]{ }$}
\end{pspicture*}
\caption{The game tree of $[\mup,0]$.}\label{fig:gmtree}
\end{center}
\end{figure}
\end{example}

\section{Categories}
Joyal's construction for a category of normal-play games $G$ and $H$ uses that $G\geq H$ if and only if $G-H \geq 0$ if and only if Left has a winning strategy playing second in the game $G-H$ (Left's set of winning strategies is ``the arrow''). In our terminology, this corresponds to the Left Maintenance for the free space of normal-play games. This follows since, for normal-play, the Proviso is implied by the Maintenance part, which is the condition $G\geq H$ in normal-play.\footnote{For normal-play games we use the standard notation for inequality $\geq$, whereas in any other (general) universe we write $\succcurlyeq$.}
We show that each \Abs\ is categorical by extending Joyal's construction to the Left Provisonal Game. 

In a category, the Hom$(H,G)$ is a collection of morphisms that link the object $H$ to the object $G$ in a, for the given structure, specific and meaningful way. The morphisms can be functions but it is not a requirement, as we saw with for example Joyal's winning strategies. The arrows preserve some important property of the given structure, such as ``winning'' in Joyal's example. 
We write $H \longrightarrow G$ 
if Hom$(H,G)$ is not empty (and $\xymatrix{H \ar[r]|f &G}$ if we want to particularize an element $f\in$ Hom$(H,G)$). To have a categorical structure, three properties must hold:

\begin{enumerate}
  \item Identity: $G \longrightarrow G$ for every object $G$;
  \item Composition: given $f\in$ Hom$(H,J)$ and $g\in$ Hom$(J,G)$ there is a  natural 
   composition $g\circ f\in$ Hom$(H,G)$;
   \item Associativity: the defined composition is associative.
\end{enumerate}

We will give a categorical construction based on a calculus of defined \emph{Left Maintenance Strategies} of the LPG. Joyal's and Conway's ``winning'' is merely a consequence of being able to maintain an advantage, specificly, being able to move when it is your turn. By using the LPG rather than the actual games, our ``arrows'' will contain all information of how Left maintains the ability to move, in particular while facing the additional burden of the Proviso part.

\begin{definition}
A \emph{play} in a Left Provisonal Game $X_0=[ G, H]$ is a chain of positions $X_0\leadsto X_1 \leadsto \cdots \leadsto X_n$ 
where the `moves $\leadsto$' correspond to alternating Left and Right (or Right and Left) moves, and where $n\ge 0$.
\end{definition}
Thus, we allow for a play to be perhaps the empty sequence of moves. Of course a play can be defined for any combinatorial game, but we only use it in the  context of Left Provisonal Games. 
\begin{definition}
A \emph{Left Maintenance Strategy} in a given LPG is a play with the following property: consider any stage of the play, where Right is to move; if Right has a move, then Left has a response to this move. 
We write $\mathcal{L}_R(H, G)$ for the set of all Left Maintenance Strategies in the game $[ G,H] $, 
assuming that Right starts, and $\mathcal{L}_L(H,G)$ for all Left Maintenance Strategies, assuming that Left starts.
\end{definition}
Note 1: The reason that we reverse the order of the games in the sets of maintenance strategies is that these will correspond to the homorphisms of the categories, and the order of categorical objects related to ``arrows'' is reversed as compared with the conventions in game theory.

Note 2: Since the LPG is a normal-play game, if you have a maintenance strategy you will eventually win. The particular winning convention of the component games inside the LPG is irrelevant as long as the universe is absolute.

Choosing a strategy $f\in \mathcal{L}_R(H,G)$ is equivalent to choosing a strategy
$f_{G^R}\in \mathcal{L}_L(H,G^R)$ for each position $G^R\in G^\mathcal{R}$ and a strategy
$f_{H^L}\in \mathcal{L}_L(H^L,G)$ for each position $H^L\in H^\mathcal{L}$. Therefore
$$\mathcal{L}_R(H,G)\cong \bigcup_{G^\mathcal{R}} \mathcal{L}_L(H,G^R)\cup \bigcup_{H^\mathcal{L}}
 \mathcal{L}_L(H^L, G).$$

\noindent The concept of a \emph{residual strategy} is crucial in obtaining the composition of morphisms. The fundamental idea is the swivel-chair strategy, using the terminology of \cite{BerleCG2001--2004} (or strategy stealing), see also 
\cite{Joyal1977,CockeCS}.\\

\begin{definition}[Left's Residual Strategy]\label{def:residual}
Given two maintenance strategies $g\in \mathcal{L}_R(J, G)$ and $f\in \mathcal{L}_R(H, J)$, we construct \textit{Left's residual strategy} $g\circledast f $ as follows.

Consider a Right move from $[ G,H] $ to $[ G^R,H] $. We will find a Left's maintenance response, given the candidate morphisms $f$ and $g$.
      
           Set up the two games $[ G, J] $ and $[ J, H] $, corresponding to the maintenance strategies $f$ and $g$ respectively; see the  columns of Figure~\ref{fig:swivelchair}. 
           
           If Left's maintenance response in $[G^R,J]$ is to $[G^{RL},J]$, then adapt this maintenance strategy for the game 
           $[G^R,H]$, as $[G^{RL},H]$. 
           
           If Left's maintenance response in $[G^R, J] $ is to $[G^R,J^R] $ then Left considers instead her maintenance response to the Right move in the game $[J^R, H] $. If this is $[J^R,H^R]$, then her response in $[G^R, H] $ is to $[G^R,H^R]$. If, instead, the response is to some $[J^{RL}, H]$, she swivels back to the game $[ G^R,J^{RL}]$, and finds a response to this Right move, and so on. 
           
           In case $[G^R,J^R] $ is a terminal position, then, because  $[J^R, H] $ is a Right's move in a Left Maintenance Strategy, there must exist a Right move in $H^R$, and so the play will terminate in $[G^R,H^R]$, with a Left win (recall the Left Maintenance Game is normal-play).
           
           In either case, by continuing this idea, because $J$ is finite and because $f$ and $g$ are maintenance strategies, eventually Left's response shifts to either of the forms $[ G^{RL},J^\alpha ] $, with $\alpha={RL\ldots L}$ or $[ J^\alpha,H^R] $ with $\alpha={RL\ldots R}$ (i.e. $\alpha$ is a finite sequence of alternating moves). In the first case, the response in $[ G^R,H]$ will be to $[ G^{RL},H] $ and in the second case it will be to $[ G^R,H^R] $.  Unless this is a terminal position, we may iterate the argument. 
          
The construction of $g\circledast f$ in the case of the Right move $[ G,H^L] $ is analogous.
\end{definition}

By the definition of $f$ and $g$ it is clear that the residual strategy $g\circledast f $ is well defined. As an immediate consequence we get
          
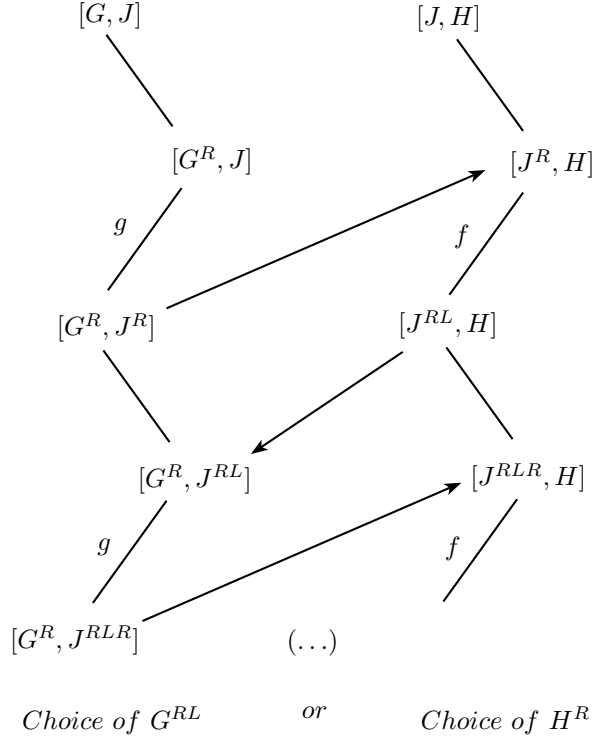
\begin{figure}
\begin{center}
\scalebox{1}{
\psset{xunit=1cm,yunit=1cm,algebraic=true,dotstyle=o,dotsize=3pt 0,linewidth=0.8pt,arrowsize=3pt 2,arrowinset=0.25}
\begin{pspicture*}(-1.5,-5.6)(7.3,5.3)
\rput[tl](-0.28,4.84){$ [G,J] $}
\psline(0.1,4.4)(0.98,3.18)
\rput[tl](0.96,2.92){$ [G^R,J] $}
\psline(1.1,2.36)(0.12,1)
\rput[tl](-0.56,0.7){$ [G^R,J^R] $}
\rput[tl](0.2,1.96){$ g $}
\rput[tl](4.22,4.8){$ [J,H] $}
\psline(4.76,4.34)(5.64,3.12)
\rput[tl](5.46,2.88){$ [J^R,H] $}
\psline{->}(0.9,0.76)(5.14,2.6)
\psline(5.64,2.3)(4.66,0.94)
\rput[tl](4.72,1.88){$ f $}
\rput[tl](3.98,0.74){$ [J^{RL},H] $}
\psline(0.06,0.2)(0.94,-1.02)
\psline(4.62,0.24)(5.5,-0.98)
\rput[tl](0.52,-1.32){$ [G^R,J^{RL}] $}
\psline{->}(4.04,0.18)(2.02,-1.18)
\psline(0.9,-1.8)(-0.08,-3.16)
\rput[tl](-0.02,-2.3){$ g $}
\rput[tl](-1.18,-3.46){$ [G^R,J^{RLR}] $}
\rput[tl](4.94,-1.3){$ [J^{RLR},H] $}
\psline{->}(0.58,-3.4)(4.82,-1.56)
\rput[tl](2.5,-3.5){$ (\ldots) $}
\rput[tl](-1.02,-4.54){$ Choice\,\,of\,\,G^{RL} $}
\rput[tl](2.7,-4.54){$ or $}
\rput[tl](4.28,-4.54){$ Choice\,\,of\,\,H^{R} $}
\psline(5.56,-1.78)(4.58,-3.14)
\rput[tl](4.6,-2.28){$ f $}
\end{pspicture*}
}\vspace{-.5 cm}
\caption{Strategy stealing}\label{fig:swivelchair}
\end{center}
\end{figure}
\begin{lemma}\label{lem:maintain} 
Consider $G,J,H\in \U$ and suppose that $f\in \mathcal{L}_R(H,J)$ and $g\in \mathcal{L}_R(J,G)$ are Left Maintenance Strategies in the games $[ J,H]$ and $[ G,J ]$ respectively. Then the residual strategy $f\circledast g\in \mathcal{L}_R(H,G)$ is a Left Maintenance Strategy in the game $[G, H]$. 
\end{lemma}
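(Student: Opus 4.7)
The plan is to verify that the residual strategy $f\circledast g$ constructed in Definition~\ref{def:residual} genuinely furnishes Left with a legal LPG response to every Right move in $[G,H]$, and that the construction can then be iterated along the play. I would break this into three items: termination of the swivel for a single Right move, legitimacy of the resulting Left move in $[G,H]$ (i.e., that it satisfies the Proviso), and extension of the construction across the remainder of the play.

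\textbf{Termination} follows by well-foundedness of the game tree of $J$. At each switch, the swivel replaces the active middle-component position $J^\alpha$ by a strict descendant ($J\to J^R\to J^{RL}\to J^{RLR}\to\cdots$), since the Left move of the $J$-coordinate produced by one strategy is taken as a virtual Right move of the $J$-coordinate in the other game. Because games in $\U$ have finite formal birthday, the swivel halts in finitely many steps, in one of the two configurations singled out in Definition~\ref{def:residual}. These induce Left's response in $[G,H]$ either as $[G^{RL},H]$ (terminating $g$-step) or as $[G^R,H^R]$ (terminating $f$-step).

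\textbf{Proviso.} To confirm that the induced move is a Left option of the LPG $[G,H]$, I would invoke transitivity of $\proviso(\U)$: if $[A,B]\in\proviso(\U)$ and $[B,C]\in\proviso(\U)$, then $[A,C]\in\proviso(\U)$, because the inequalities on $o_L$ and $o_R$ from Definition~\ref{def:proviso} chain through against every atomic test game $X$. When the swivel terminates on a $g$-step giving $[G^{RL},J^\alpha]\in\proviso(\U)$, the immediately preceding $f$-step (which produced the virtual Right move reaching $J^\alpha$ on the $g$-side) provided $[J^\alpha,H]\in\proviso(\U)$; composing yields $[G^{RL},H]\in\proviso(\U)$. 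The $f$-terminating case is handled symmetrically, pairing $[G^R,J^\beta]\in\proviso(\U)$ from the last $g$-step with $[J^\beta,H^R]\in\proviso(\U)$ from the last $f$-step. I expect this step to be the main obstacle, especially in the boundary case where the swivel terminates at its very first step and the transitivity chain is shortest, so the bookkeeping of which Proviso facts are available at that moment has to be carried out carefully.

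\textbf{Iteration.} After Left's response, the play sits at a new LPG position whose shared middle $J^\alpha$ has strictly smaller formal birthday than $J$. The restrictions of $g$ and $f$ to the relevant subtrees remain maintenance strategies there, so applying $\circledast$ recursively, via an outer induction on the joint formal birthday of $(G,J,H)$, extends the strategy throughout the play. The Right move $[G,H]\to[G,H^L]$ is handled by the analogous construction indicated at the end of Definition~\ref{def:residual}. Together these steps establish $f\circledast g\in\mathcal{L}_R(H,G)$.
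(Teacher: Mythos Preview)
Your approach is the same as the paper's, only far more detailed: the paper's entire proof is a single sentence appealing to the swivel-chair construction (``Left has a response to any Right move at each stage of play''), with well-definedness already asserted at the end of Definition~\ref{def:residual}. Your termination and iteration paragraphs are correct elaborations of points the paper leaves implicit.

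Your Proviso paragraph goes beyond anything the paper checks, and the concern you raise about the boundary case is genuine. The transitivity argument is right in the generic swivel step, but when the swivel halts at the very first $g$-step (so $J^\alpha=J$), you need $[J,H]\in\proviso(\U)$ to chain $[G^{RL},J]\in\proviso(\U)$ into $[G^{RL},H]\in\proviso(\U)$; symmetrically, an immediate $f$-termination needs $[G,J]\in\proviso(\U)$. Neither is among the hypotheses of the lemma as stated, and having maintenance strategies $f,g$ alone does not furnish them. So your flagged obstacle is not just bookkeeping: without an extra assumption the step does not close. The resolution is that in the only place the lemma is used (Theorem~\ref{thm:cat}), the morphisms $f,g$ exist only when $J\succcurlyeq H$ and $G\succcurlyeq J$, whence $[J,H],[G,J]\in\proviso(\U)$ by Theorem~\ref{thrm:ao}, supplying exactly the missing initial links in your transitivity chain. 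If you want a self-contained lemma, add these Proviso memberships to the hypotheses; the paper simply glosses over the point.
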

\begin{proof}
By the swivel-chair construction in the definition of a residual strategy for the game $[G,H]$, Left has a response to any Right move at each stage of play. Thus $f\circledast g\in \mathcal{L}_R(H,G)$.
\end{proof}
\begin{lemma} \label{lem:asso}
The operator $\circledast$ is associative.
\end{lemma}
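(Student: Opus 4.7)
The plan is to prove $(h\circledast g)\circledast f = h\circledast(g\circledast f)$ for any triple of Left Maintenance Strategies $f\in\mathcal{L}_R(H,J_1)$, $g\in\mathcal{L}_R(J_1,J_2)$, $h\in\mathcal{L}_R(J_2,G)$, with both sides taking values in $\mathcal{L}_R(H,G)$. I would proceed by well-founded induction on the combined game-tree rank of $G,J_2,J_1,H$. Since each element of $\mathcal{L}_R(H,G)$ is specified by its prescribed Left reply at every Right move encountered, it suffices to show that the two residuals prescribe the same reply to an arbitrary first Right move in $[G,H]$; the inductive hypothesis then extends the equality to all subsequent positions along the play.

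Fix a Right move, say $[G,H]\to[G^R,H]$ (the case $[G,H]\to[G,H^L]$ is symmetric). The heart of the argument is to exhibit a single ``three-gap'' swivel procedure on the four games $G,J_2,J_1,H$ that both residuals implement: starting from the virtual position $[G^R,J_2]$, Left consults $h$; she then either terminates with a $[G^{RL},J_2]$-type response or swivels to $[G^R,J_2^R]$ and consults $g$ on $[J_2^R,J_1]$; from there she either sends control back to $h$ via a $[G^R,J_2^{RL}]$-type response or continues to $[J_2^{RL\ldots R},J_1^R]$ and hands off to $f$ on $[J_1^R,H]$, and so on. Termination and the existence of every appealed-to option are guaranteed by finiteness of the game trees together with the maintenance property of $f,g,h$. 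The walk ends in a terminal reply of the form $[G^{RL},H]$ or $[G^R,H^R]$ in the actual LPG $[G^R,H]$.

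Crucially, this walk depends only on the triple $(h,g,f)$ and the starting Right move, not on any parenthesization. Unfolding $(h\circledast g)\circledast f$ by Definition~\ref{def:residual}, the inner residual $h\circledast g$ handles all swivels between $[G,J_2]$ and $[J_2,J_1]$, and control is handed to $f$ exactly when $h\circledast g$ would emit a reply of the form $[G^{\alpha},J_1]$ or $[\cdot,J_1^R]$; unfolding $h\circledast(g\circledast f)$, the inner residual $g\circledast f$ handles swivels between $[J_2,J_1]$ and $[J_1,H]$, with control escalating to $h$ whenever a $[J_2,\cdot]$-type reply is emitted. The two associations therefore merely choose different moments to delegate between the two halves of the same walk, and output the same terminal reply in $[G^R,H]$.

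The main obstacle is the bookkeeping: the swivel walk branches through a number of case splits (notably the terminal situation where $[G^R,J_2^R]$ has no further options, where maintenance through $g$ and then $f$ forces existence of a compatible $J_1^R$ and then $H^R$), and matching them up across the two associations is the bulk of the work. The cleanest formalization is to first define a single ``ternary swivel'' procedure $\mathrm{Sw}(h,g,f)$ on Right moves in $[G,H]$ — mirroring Definition~\ref{def:residual} but with three alternating consultations instead of two — then verify by a straightforward case analysis that both $(h\circledast g)\circledast f$ and $h\circledast(g\circledast f)$ coincide pointwise with $\mathrm{Sw}(h,g,f)$. Associativity is then immediate at the top move, and the induction on game-tree rank closes the argument for the subsequent play.
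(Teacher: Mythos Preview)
Your proposal is correct and follows essentially the same route as the paper: define a single ternary swivel procedure over the three gaps and argue that both parenthesizations of $\circledast$ coincide with it, hence with each other. The paper's proof is far terser---it simply asserts the coincidence is ``trivial'' after setting up the three-strategy stealing---whereas you make the inductive structure and case bookkeeping explicit, but the underlying idea is the same.
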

\begin{proof}
Given $f\in \mathcal{L}_R(H,J)$, $h\in \mathcal{L}_R(J,W)$, and $g\in \mathcal{L}_R(W,G)$, we construct the composite residual strategy $g\circledast h \circledast f \in \mathcal{L}_R(G,H)$ in analogy with the swivel chair construction in Definition~\ref{def:residual}. Against, say, a Right move from $(G,H)$ to $(G^R,H)$, Left executes the stealing procedure over the strategies $f$, $h$ and $g$, getting, after a finite number of steps, an option $G^{RL}$ or $H^R$. That $g\circledast h \circledast f=g\circledast (h \circledast f)=(g\circledast h) \circledast f$ is then trivial. 
\end{proof}

\begin{definition}[Mimic strategy]
Consider the Left Provisonal Game  position $[G,G]$. We define the \emph{mimic strategy} $m\in \mathcal{L}_R(G,G)$ (or copy-cat) as the strategy where Left replies to $[G^R,G]$ and $[G,G^L]$ with $[G^R,G^R]$ and $[G^L,G^L]$ respectively, and repeats this mimic process during the play.
\end{definition}

\begin{lemma}\label{lem:mimic}
The mimic strategy is a Left Maintenance Strategy.
\end{lemma}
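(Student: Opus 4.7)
The plan is to verify, by induction on the position, that at every stage where Left is asked to mimic, her reply is a legal Left option of the Left Provisonal Game. Since the mimic keeps the position in the diagonal form $[X,X]$ for some subposition $X$ of $G$, the essential question is whether Left's intended replies $[X^R,X^R]$ (after Right's move to $[X^R,X]$) and $[X^L,X^L]$ (after Right's move to $[X,X^L]$) always lie in $\proviso(\U)$.

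First I would record the reflexivity of the Proviso: for every $X\in\U$, the pair $[X,X]$ lies in $\proviso(\U)$, since the two conditions of Definition~\ref{def:proviso} reduce to the trivial inequalities $o_L(X+Y)\ge o_L(X+Y)$ for all left-atomic $Y$ and $o_R(X+Y)\ge o_R(X+Y)$ for all right-atomic $Y$. This single observation removes the only obstacle to the mimic reply being a legal Left option.

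Next I would check the induction step explicitly. From a diagonal position $[X,X]$, Right's two types of move are to $[X^R,X]$ and to $[X,X^L]$; by Definition~\ref{def:dng}(2), Left's candidate replies $[X^R,X^R]$ and $[X^L,X^L]$ have the required syntactic form (the first is of type $[G,H^R]$ with $G:=X^R$, $H:=X$; the second is of type $[G^L,H]$ with $G:=X$, $H:=X^L$). By the reflexivity observation, both candidates satisfy the Proviso, and so are genuine Left options of the LPG. The resulting position is again of diagonal form, so the same argument applies recursively to any subsequent Right move. Because games in $\U$ are short, this recursion terminates, and at every stage at which Right is to move and has a move, Left has a mimic response, which is exactly the defining condition of $\mathcal{L}_R(G,G)$.

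The only step that might require elaboration is confirming that Proviso reflexivity holds uniformly in every Absolute Universe, but this is a one-line consequence of Definition~\ref{def:proviso}; the rest is simply matching Left's mimic reply against the syntactic template in Definition~\ref{def:dng}(2), so there is no deeper combinatorial obstacle.
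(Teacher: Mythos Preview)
Your argument is correct and follows the same line as the paper's own proof: both rest on the observation that the Proviso is trivially satisfied on the diagonal $[X,X]$, so Left's mimic reply is always a legal Left option and the position remains diagonal. Your version simply spells out the induction and the syntactic matching against Definition~\ref{def:dng}(2) more explicitly than the paper's one-sentence proof.
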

\begin{proof}
In any game of the form $[X,X]$, the proviso is trivially satisfied, so Left has the same options as Right, and, as a required response, can thus imitate each Right move.
\end{proof}
By using maintenance strategies in the Left Provisonal Game as the morphisms, we generalize Joyal's results on categories for normal-play, to any \Abs\ of combinatorial games.
\begin{thrm}\label{thm:cat} Consider an \Abs\ $\U$ and $G,H\in \U$. If $G\su H$, then, the structure $(\mathbb{U}, f, \circ)$, where $ f\in \mathcal{L}_R(H,G)=\mathrm{Hom}(H,G)$, and $g\circ f=g\circledast f$, is categorical.
\end{thrm}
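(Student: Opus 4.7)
The plan is to verify each of the three categorical axioms listed earlier in Section~3 by invoking exactly one of the preceding lemmas, so the proof reduces to a bookkeeping synthesis of Lemmas~\ref{lem:maintain}, \ref{lem:asso} and \ref{lem:mimic}.

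First I would dispatch Identity using Lemma~\ref{lem:mimic}: for every $G\in\U$ the mimic strategy $m\in\mathcal{L}_R(G,G)=\mathrm{Hom}(G,G)$, so an arrow $G\longrightarrow G$ always exists. Consistently, $G\su G$ follows from Theorem~\ref{thm:dng}, since $[G,G]\in\proviso(\U)$ is trivial and $m$ witnesses $[G,G]\geq 0$. Next, for Composition I would invoke Lemma~\ref{lem:maintain}: from $f\in\mathcal{L}_R(H,J)$ and $g\in\mathcal{L}_R(J,G)$ the residual strategy $g\circledast f\in\mathcal{L}_R(H,G)$, so setting $g\circ f:=g\circledast f$ gives the required operation $\mathrm{Hom}(J,G)\times\mathrm{Hom}(H,J)\longrightarrow\mathrm{Hom}(H,G)$. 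Finally, Associativity is exactly the content of Lemma~\ref{lem:asso}, which states that $\circledast$ is associative.

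The hypothesis $G\su H$ is then reconciled with the construction through Theorem~\ref{thm:dng}: it is precisely the condition that $[G,H]\in\proviso(\U)$ together with $\mathrm{Hom}(H,G)\neq\emptyset$, so the three lemmas not only build the categorical structure on $\U$ but also pin down which hom-sets are non-empty, namely those indexed by the order relation $\su$. I do not anticipate any substantive obstacle, because every combinatorial step has been done in the cited lemmas and the role of Theorem~\ref{thm:dng} is only to translate between the order-theoretic hypothesis and the combinatorial presence of strategies. The one point worth flagging is notational: the definition of ``category'' given in Section~3 demands only the existence of an identity arrow, a composition operation, and associativity, without explicitly requiring the identity laws $m\circ f=f$ and $g\circ m=g$. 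Should one wish to verify these as well, they follow from unrolling Definition~\ref{def:residual} when one column is mimic, since each Right move there is echoed verbatim and the residual response in $[G,H]$ collapses to the response prescribed by the other strategy.
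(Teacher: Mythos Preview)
Your proposal is correct and follows essentially the same route as the paper: invoke Lemma~\ref{lem:mimic} for identity, Lemma~\ref{lem:maintain} for composition, Lemma~\ref{lem:asso} for associativity, and use the basic order theorem (the paper cites Theorem~\ref{thrm:ao}, you cite the equivalent Theorem~\ref{thm:dng}) to translate $G\su H$ into non-emptiness of $\mathrm{Hom}(H,G)$. Your extra remark on the identity laws $m\circ f=f$ and $g\circ m=g$ is a welcome addition that the paper leaves implicit.
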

\begin{proof}
By Theorem~\ref{thrm:ao}, $G\su H$ implies $[G,H]\in \maintain(\U)\cap \proviso(\U)$, which in particular implies that, in the LPG, the set of Left's maintenance strategies $\mathcal{L}_R(H,G)$ is nonempty. Moreover, we have seen that the operator is consistent with the residual strategy as composition and the mimic strategy as identity. Indeed, that the following diagram commutes was explained in Lemma~\ref{lem:maintain}.

$$\scalebox{1.16}{\xymatrix{H \ar[r]|f \ar[rd]|{g\circledast h} &J \ar[d]|g \\ &G \ar@(dl,dr)_{m}}}\vspace{5 mm}$$
That the defined composition (residual strategy) is associative was explained in Lemma~\ref{lem:asso}.
\end{proof}

For any Absolute Universe $\U$, call this category ${\bf LNP(\U)}$, Left Normal Play over $\U$. We finish off by continuing Example~\ref{ex:1}, the dicot mis\`ere-play application.

\begin{example} We compare the games of rank 2 in the dicot mis\`ere-play universe. 
The Proviso is $o(G)\geqslant o(H)$. The order is given in Figure \ref{fig:dmday2}, the value of the LPG 
$[G,H]$ where $G$ covers $H$ in the partial order is written by the appropriate edge. 
In the picture, the dicot mis\`ere-play game values (literal forms) are ${\uparrow}=\langle 0\,|\,*\rangle$, ${\downarrow}=\langle *\,|\,0\rangle $, 
$\mup=\langle 0,*\,|\,*\rangle $,  $\mown=\langle *\,|\,*,0\rangle $ (``mown''), 
$\mup *=\langle 0,*\,|\,0\rangle $, and ${\mown} =\langle 0\,|\,*,0\rangle $. 
\end{example}

\begin{figure}
\begin{center}
\scalebox{0.75}{
\psset{xunit=0.8cm,yunit=0.8cm,algebraic=true,dotstyle=o,dotsize=5pt 0,linewidth=0.8pt,arrowsize=3pt 2,arrowinset=0.25}
\begin{pspicture*}(-3,-7)(14.7,4)
\psline(1.54,-1.54)(1.54,2.38)
\psline(-2.38,-1.54)(1.54,2.38)
\psline(1.54,2.38)(5.46,-1.54)
\psline(5.46,-1.54)(9.38,-5.46)
\psline(1.54,-1.54)(9.38,-5.46)
\psline(1.54,-5.46)(9.38,-1.54)
\psline(9.38,-1.54)(9.38,2.38)
\psline(5.46,-1.54)(9.38,2.38)
\psline(9.38,2.38)(13.3,-1.54)
\psline(13.3,-1.54)(9.38,-5.46)
\psline(-2.38,-1.54)(1.54,-5.46)
\psline(1.54,-5.46)(5.46,-1.54)
\rput[tl](-2.98,-1.42){\scalebox{2}{$*$}}
\rput[tl](8.1,-0.80){\scalebox{2}{${\uparrow}$}}
\rput[tl](13.5,-1.2){\scalebox{2}{$ 0 $}}
\rput[tl](3.9,-1.25){\scalebox{2}{$ * 2 $}}
\rput[tl](0.9,-1.2){\scalebox{2}{${\downarrow}$}}
\rput[tl](9.76,3.4){\scalebox{2}{$\mup$}}
\rput[tl](2.02,3.4){\scalebox{2}{$\mup$}}
\rput[tl](2.6,3.1){\scalebox{2}{$*$}}
\rput[tl](9.7,-5.82){{\scalebox{2}{$\mown$}}}
\rput[tl](1.96,-5.82){{\scalebox{2}{$\mown$}}}
\rput[tl](2.47,-5.95){{\scalebox{2}{$*$}}}
\rput[tl](-0.92,0.72){$ \uparrow $}
\rput[tl](-0.88,-3.7){$ \uparrow $}
\rput[tl](11.66,-3.6){$ \uparrow $}
\rput[tl](11.72,0.62){$ \uparrow $}
\rput[tl](3.72,-4.6){$ 0 $}
\rput[tl](1.14,0.26){$ 0 $}
\rput[tl](7.04,-4.62){$ 0 $}
\rput[tl](9,0.24){$ 0 $}
\rput[tl](3.92,0.64){$ \{\uparrow\,|\,\downarrow *\} $}
\rput[tl](1.2,-3.64){$ \{\uparrow\,|\,\downarrow *\} $}
\rput[tl](4.2,1.54){$ \{\uparrow\,||\,\{0,\downarrow *\,|\,0,\downarrow *\}\} $}
\rput[tl](7.4,-3.0){$ \{\uparrow\,||\,\{0,\downarrow *\,|\,0,\downarrow *\}\} $}
\begin{scriptsize}
\psdots[dotstyle=*](-2.38,-1.54)
\psdots[dotstyle=*](5.46,-1.54)
\psdots[dotstyle=*,linecolor=darkgray](1.54,-1.54)
\psdots[dotstyle=*,linecolor=darkgray](1.54,-5.46)
\psdots[dotstyle=*,linecolor=darkgray](1.54,2.38)
\psdots[dotstyle=*,linecolor=darkgray](9.38,2.38)
\psdots[dotstyle=*,linecolor=darkgray](9.38,-5.46)
\psdots[dotstyle=*](13.3,-1.54)
\psdots[dotstyle=*,linecolor=darkgray](9.38,-1.54)
\end{scriptsize}
\end{pspicture*}
}
\caption{Games of rank 2 in dicot mis\`ere-play.}\label{fig:dmday2}
\end{center}
\end{figure}
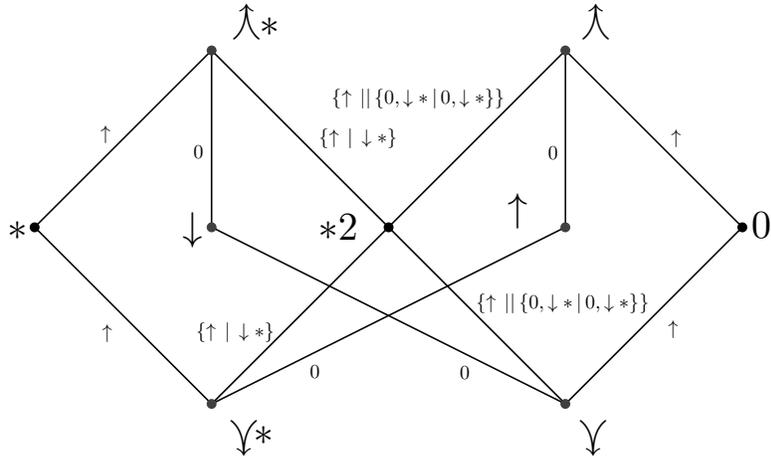
\noindent {\bf Acknowledgement}. We thank Darien DeWolf for suggesting our category's name.
\newpage

\section*{Appendix A}\label{sec:absU}


The following is a shortened introduction to Absolute CGT \cite{LNS}. 
Combinatorial games have two players, usually called \textit{Left} (female) and
 \textit{Right} (male) who move alternately. 
Both players have perfect information, and there are no chance devices. 
Thus these are games of pure strategy with no randomness. 
Combinatorial games are commonly represented by a rooted tree called the game tree. 
The nodes are positions that can be reached 
during the play of the game and the root is the present position. The children of a node 
are all the positions that can be reached 
in one move and these are called \textit{options}. We distinguish between the left-options, 
those positions that Left can reach in one move,
 and the right-options,
 denoted by $\GL$ and $\GR$ respectively. Any game $G$ can be represented by two such lists and we 
 write $G=\cgs{\GL}{\GR}$. Thus, $G$ can be expanded in terms of elements of its \emph{terminal} 
 positions (those positions with no options). 
 The \textit{rank} of a game is the depth of the game tree (see also Definition~\ref{def:recur}). 
 This gives the common proof technique `induction on the options' since 
the depth of the game tree of an option is at least one less than that of the original position 
(we study games without cycles).

Let $(\A,+)$ be a totally ordered, additive group. A 
 terminal position will be of the form 
$\pura{\ell}{r}$ where $\ell, r\in \A$. The intuition, adapted from scoring game theory is that, if Left is to move, then the game is finished, 
and the `score' is $\ell$, and similarly for Right, where the `score' would be $r$. In general, if $G$ is a game with no Left options then we write 
$\GL=\emptyset^{\ell}$ for some $\ell\in \A$ and if Right has no options then we write $\GR=\emptyset^r$ for 
some $r\in \A$.

We refer to $\emptyset^a$ as an \textit{atom} and $a\in \A$ as the \textit{adorn}. Positions in which Left (Right)
 does not have a move are called \textit{left- (right-) atomic}. A \textit{purely-atomic} position 
 is both left- and right-atomic. 
It is useful to identify  $\adr{a}=\langle \emptyset^a\mid \emptyset^a\rangle$ for any $a\in \A$. 
For example, $\adr{0}=\langle \emptyset^0\mid \emptyset^0\rangle$ where 0 is the identity of $\A$.

 \begin{definition}\label{def:recur}
 Let $\A$ be a totally ordered group and let $\Omega_0=\{\pura{\ell}{r}\mid \ell, r \in \A\}$. 
 For $n>0$, the set $\Omega_n$ is the set of all games with finite sets of options in $\Omega_{n-1}$, 
 including games which are left- and/or right-atomic, and the set of games of \textit{rank} $n$ is $\Omega_n\setminus \Omega_{n-1}$. Let $\Omega = \cup_{n\geq 0} \Omega_n$. Then $(\Omega, \A)$ is  a free \textit{space} of games.
 \end{definition}
 
 Many combinatorial games decompose into independent sub-positions as play progresses. 
A player must choose exactly one of these sub-positions and play in it. This is known as the disjunctive sum. Here, 
and elsewhere, an expression of the type $\GL + H$ denotes the list of games  of the form $G^L+H$, $G^L\in \GL$.

\begin{definition}\label{def:disjsum}
Consider a totally ordered group $\A$ and $G, H\in (\Omega,\A)$. The disjunctive sum of $G$ and $H$ is given by:
\begin{eqnarray*}
G+H&=& \langle \, \emptyset^{\ell_1+\ell_2}\mid\emptyset^{r_1+r_2} \, \rangle, \quad\textrm{ if $G=\langle \, \emptyset^{\ell_1}\mid\emptyset^{r_1} \, \rangle$ and
$H=\langle \, \emptyset^{\ell_2}\mid\emptyset^{r_2} \, \rangle$;}\\
&=&\langle \, \emptyset^{\ell_1+\ell_2}\mid\GR +H,G+\HR \, \rangle, \textrm{ if
$G=\langle \, \emptyset^{\ell_1}\mid\GR  \, \rangle$,
$H=\langle \, \emptyset^{\ell_2}\mid\HR \, \rangle$},\\
&{}&\qquad \textrm{ and at least one of $\GR $ and $\HR$
is not empty;}\\
&=&\langle \, \GL +H,G+\HL\mid \emptyset^{r_1+r_2} \, \rangle, \textrm{ if
$G=\langle \, \GL \mid\emptyset^{r_1} \, \rangle$,
$H=\langle \, \GL \mid\emptyset^{r_2} \, \rangle$},\\
&{}&\qquad \textrm{ and at least one of $\GL $ and $\HL$
is not empty;}\\
&=&\langle \, \GL +H,G+\HL\mid\GR +H,G+\HR \, \rangle,
\textrm{ otherwise.}
\end{eqnarray*}
\end{definition}

\begin{definition}\label{def:space}
A combinatorial game space is the structure 
$$\Omega=((\Omega,\A),\s, \nu_L, \nu_R,+),$$ 
where `+' is the disjunctive sum in the free space $(\Omega, \A)$, $\s$\ is a totally ordered set of game results, and $\nu_L:\A\rightarrow \s$ and $\nu_R:\A\rightarrow \s$ are order preserving maps. Moreover, if $| \A | > 1$ then require $\nu(a) = \nu_L(a) = \nu_R(a)$, for all $a\in \A$.
\end{definition}

Suppose $a,b\in \s$ with $a>b$, the standard convention is that Left prefers 
 $a$  and Right prefers $b$. The three winning conventions usually considered in the literature are:
 \begin{itemize}
\item \textit{normal-play} corresponds to: (i)  the trivial group $\A=\{0\}$ and the set $\s=\{-1,+1\}$; (ii) the maps 
$\nu_L(0)=-1$, $\nu_R(0) =+1$, 
\item \textit{mis\`{e}re-play} corresponds to:
(i) the trivial group $\A=\{0\}$ and the set $\s=\{-1,+1\}$; (ii) the maps 
$\nu_L(0) = +1$, $\nu_R(0) =-1$, 
\item \emph{scoring-play} usually corresponds to the adorns being the group of real numbers, 
with its natural order and addition, and moreover $\s= \A= \mathbb R$, and where $\nu$ is the identity map.
\end{itemize}
The \emph{conjugate} denotes the position where Left and Right have `switched roles'. 

\begin{definition}\label{def:conjugate} 
The \textit{conjugate} of $G\in \Omega$ is
\[ \overset{\leftrightarrow}{G} = 
\begin{cases}
\pura{-b}{-a}, \mbox{ if $G=\pura{a}{b}$, $a,b\in \A$}\\
\cgs{\overset{\leftrightarrow}\GR}{\emptyset^{-a} }, \mbox{ if $G=\cgs{\emptyset^{a}}{\GR}$}\\
\cgs{\emptyset^{-a}}{\overset{\leftrightarrow}\GL }, \mbox{ if $G=\cgs{\GL}{\emptyset^{a}}$}\\
\cgs{\overset{\leftrightarrow}\GR }{\overset{\leftrightarrow}\GL }, \mbox{otherwise},
\end{cases}
\]
where $\overset{\leftrightarrow}\GR$ denotes the list of games 
$\overset{\leftrightarrow}X $, for $X\in \GR$, and similarly for $\GL$. 
\end{definition}

By the recursive definition of the free space $(\Omega,\A)$, each combinatorial game space is closed under conjugation.
In normal-play, the games form an ordered group and each game $G$ has an additive inverse,
appropriately called $-G$ and $-G = \Conj{G}$. However, there are other spaces of games, for example scoring and mis\`ere 
games, where $\Conj{G}$ is not necessarily $-G$ (e.g. \cite{Mille2015}). 

 \begin{definition}\label{def:Universe}
A \textit{universe} of games, $\U\subseteq \Omega$, is a subspace of a given combinatorial game space $\Omega=((\Omega,\A),\s, \nu_L, \nu_R, +)$, with:
\begin{enumerate}
\item $\boldsymbol a=\langle \emptyset^a\mid \emptyset^a\rangle\in \U$ for all $a\in \A$;
\item \textit{options closure:}  if $A\in \U$ and $B$ is an option of $A$ then $B\in \U$;
\item  \textit{disjunctive sum closure:} if $A,B\in \U$ then $A+B\in \U$;
\item \textit{conjugate closure}: if $A\in \U$ then $\Conj{A}\in \U$;
\end{enumerate}
\end{definition}

The mapping of adorns in $\A$\ to  elements of $\s$\ is extended to positions in general via two recursively defined (optimal play) \emph{outcome functions}.
 
\begin{definition}\label{def:outcomes} 
Let $G\in \U \subseteq \Omega$ and consider given maps $\nu_L:\A\rightarrow \s$ and $\nu_R:\A\rightarrow \s$, where $S$ is a totally ordered set. The \textit{left}- and \textit{right-outcome functions} are $o_L:\Omega \rightarrow \s, o_R:\Omega\rightarrow \s$, 
where$$o_L(G) = \begin{cases} 
\nu_L(\ell) & \textrm{if  $G = \langle \emptyset^\ell\mid \GR\rangle$,} \\
 \max_L\{o_R(G^L)\} & \textrm{otherwise} 
\end{cases}
$$

$$o_R(G) = \begin{cases} 
\nu_R(r) & \textrm{if  $G = \langle \GL\mid \emptyset^r \rangle$,} \\
 \min_R\{o_L(G^R)\} & \textrm{otherwise,} 
\end{cases}
$$
where the $\max_L$ ($\min_R$) ranges over all Left (Right) options.
\end{definition}

From this we conclude that each universe is a partially ordered commutative monoid
with $\adr{0}$ as the additive identity. 

Let $G\in \U$. From Definition \ref{def:outcomes} we have that $o_L(G)=\nu_L(\ell)$ and 
$o_R(G)=\nu_R(r)$ for some $\ell,r\in\A$. Therefore we may always assume that
 the set of (left- and right-) outcomes is $\s=\{ \nu_L(a):a\in\A\}\cup\{\nu_R(a):a\in \A\}$. 

\begin{definition}
A universe $\U$ of combinatorial games is \emph{parental} if, for each pair of finite \emph{non-empty} lists,
 $\mathcal{ A}, \mathcal{ B}\subset \U$, then
 $\cg{\mathcal{ A}}{\mathcal{ B}}\in \U$.
\end{definition}

\begin{definition}
A universe $\U$ of combinatorial games is \emph{dense} if, for all $G\in \U$, for any  $x,y\in \s$, 
there is a $H\in \U$ such that $o_L(G+H)=x$ and $o_R(G+H) =y$.
\end{definition}

\begin{definition}\label{def:cgs}
A universe $\U$ of combinatorial games is an {\textit \Abs\ }if it is both parental and dense.
\end{definition}

A partial order is defined on any universe of additive combinatorial games.
\begin{definition}\label{def:order} Let $\U$ be any universe of combinatorial games. For $G, H \in \U$, $G\su H $ modulo $\U$ if and only if 
$o_L(G+ X)\ge o_L(H+ X)$ and $o_R(G+ X)\ge o_R(H+ X)$, for all games $X\in \U $.
\end{definition}
The main results for Absolute Combinatorial Game Theory \cite{LNS} are the following improvements of general game comparison. (The ``Common Normal Part'' corresponds to the Maintenance part in this paper.)

\begin{thrm}[Basic  order of games \cite{LNS}]  
Consider games $G, H\in \U$, an \Abs. Then $G\succcurlyeq H$ if and only if the following two conditions hold.\vspace{0.2 cm}

\noindent
Proviso:
\begin{enumerate}[]
 \item $o_L(G+X)\geqslant o_L(H+X)$ for all left-atomic $X\in \U$; 
  
 \item
 $o_R(G+X)\geqslant o_R(H+X)$ for all right-atomic $X\in \U$;
\end{enumerate}
\noindent
Common Normal Part:
\begin{enumerate}[]
\item For all $G^R$, there is $H^R$ such that $G^R\succcurlyeq H^R$ or there is $G^{RL}$ such that $G^{RL}\succcurlyeq H$;
\item For all $H^L$, there is $G^L$ such that $G^L\succcurlyeq H^L$ or there is $H^{LR}$ such that $G\succcurlyeq H^{LR}$.
\end{enumerate}
\end{thrm}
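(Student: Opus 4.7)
My plan is to prove the two implications separately; only one direction contains real work.

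For the forward direction, assume $G \su H$. The Proviso is a direct specialization of Definition~\ref{def:order} to left-atomic and right-atomic test games $X$. For the Common Normal Part, by conjugate closure only the first bullet needs argument: applying the same argument to $\Conj{G}$ and $\Conj{H}$ delivers the second. I would argue by contradiction: suppose there is a Right option $G^R \in G^\Rc$ such that for every $H^R \in H^\Rc$ we have $G^R \not\su H^R$, and for every Left option $G^{RL}$ of $G^R$ we have $G^{RL} \not\su H$. These failures yield a finite family of witness games $A_{H^R}, B_{G^{RL}} \in \U$ at each of which a particular outcome inequality is violated. The core of this direction is to amalgamate this finite collection of local witnesses into a single distinguishing game $X \in \U$ for which $o_R(G+X) < o_R(H+X)$, contradicting $G \su H$. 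I would build $X$ using parentality, with Right options of $X$ encoding the $A_{H^R}$'s and Left options encoding the $B_{G^{RL}}$'s, and then use density to tune the atomic adorns so that Right's move from $G+X$ to $G^R+X$ is strictly better than any Right move available in $H+X$ while no Left response in $G^R+X$ can repair the deficit. The main obstacle is exactly this amalgamation: all of the heterogeneous local failures (each witnessing a different one of the two outcome inequalities, at a different option) must be made simultaneously visible as a single $o_R$-violation after Right's one move to $G^R+X$. Parentality and density are both crucial here because neither alone suffices to control the outcome of an engineered test game.

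For the converse, assume the Proviso and the Common Normal Part. I would prove $G \su H$ by induction on the rank of the test game $X \in \U$, establishing both outcome inequalities simultaneously. The base case $X \in \Omega_0$ is purely atomic, and the Proviso gives both inequalities directly. For the inductive step applied to $o_R(G+X) \ge o_R(H+X)$, it suffices to verify that every Right option of $G+X$ has $o_L$-value at least $o_R(H+X)$. A Right move to $G + X^R$ is handled by the inductive hypothesis at the lower-rank test game $X^R$, giving $o_L(G+X^R) \ge o_L(H+X^R) \ge o_R(H+X)$, the second inequality because $H+X^R$ is itself a Right option of $H+X$. A Right move to $G^R+X$ is handled by invoking the Common Normal Part at $G^R$: either some $H^R$ satisfies $G^R \su H^R$, yielding $o_L(G^R+X) \ge o_L(H^R+X) \ge o_R(H+X)$ directly, or some $G^{RL}$ satisfies $G^{RL} \su H$, yielding $o_L(G^R+X) \ge o_R(G^{RL}+X) \ge o_R(H+X)$, the first inequality because $G^{RL}+X$ is a Left option of $G^R+X$. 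The dual inequality $o_L(G+X) \ge o_L(H+X)$ is proved symmetrically, using the second bullet of the Common Normal Part.

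In summary, the entire difficulty of the theorem lies in the forward direction: the construction, from a finite family of local outcome failures, of a single amalgamated distinguishing game $X$, exploiting precisely the parental and dense structure of any Absolute Universe.
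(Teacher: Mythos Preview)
This theorem is not proved in the present paper: it appears in Appendix~A as a citation from the authors' earlier work~\cite{LNS}, stated without argument. There is therefore no in-paper proof to compare your proposal against.

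Your sketch is nonetheless the standard architecture and matches the proof in~\cite{LNS}. The backward direction is essentially complete as written; one minor sharpening is that the Proviso already disposes of \emph{all} left-atomic $X$ for the $o_L$ inequality and all right-atomic $X$ for the $o_R$ inequality, not merely the purely atomic base case, so those subcases of the inductive step need no unfolding. Your appeal to $G^R\su H^R$ (respectively $G^{RL}\su H$) at the \emph{same} test game $X$ is legitimate because the Common Normal Part hands you these relations as hypotheses in the full sense of Definition~\ref{def:order}, so no circularity with the induction on the rank of $X$ arises.

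For the forward direction you correctly identify the amalgamation of local witnesses into a single distinguishing $X$, via parentality and density, as the real content. This is exactly the mechanism in~\cite{LNS}. One point your sketch underplays: each local failure $G^R\not\su H^R$ or $G^{RL}\not\su H$ may be witnessed by a violation of \emph{either} the $o_L$ or the $o_R$ inequality, and the construction must convert this heterogeneous collection into a single $o_R$-violation visible after Right's one move to $G^R+X$. Density is used not just to tune atomic adorns but to manufacture, for each witness, a companion game that shifts the failing outcome to the desired side; parentality then packages these into the option sets of $X$. A full proof would need to spell out this normalization step.
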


\begin{cor}[Subordinate game comparison \cite{LNS}]\label{thm:allcomps}  Let $G, H\in \U$, an Absolute Universe. Then $G\succcurlyeq_{\U} H$ if the Common Normal Part holds and if ${\U} $ is the
\begin{itemize}
\item  normal-play universe;
\item dicot mis\`ere-play universe, and $o(G)\geqslant o(H)$;
\item free mis\`ere-play space, and $H^\mathcal{L}=\atom{0}\Rightarrow G^\mathcal{L}=\atom{0}$ and $G^\mathcal{R}=\atom{0}\Rightarrow H^\mathcal{R}=\atom{0}$;
\item dicot scoring-play universe, and $o(G)\geqslant o(H)$;
\item guaranteed scoring-play universe, and $\underline{o}_L(G)\geqslant \underline{o}_L(H)$ and $\overline{o}_R(G)\geqslant \overline{o}_R(H)$, where $\underline{o}$ and $\overline{o}$ denotes Right's and Left's pass allowed left- and right-outcomes respectively \cite{LNS}.
\end{itemize}
\end{cor}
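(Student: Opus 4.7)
The plan is to invoke the preceding Basic Order of Games Theorem, which characterises $G \succcurlyeq H$ as the conjunction of the Proviso and the Common Normal Part. Since the Common Normal Part is assumed in every clause of the corollary, the proof reduces, bullet by bullet, to checking that the listed universe-specific condition implies the Proviso of Definition~\ref{def:proviso}. Thus the whole statement splits into five short Proviso-verifications, one per universe.

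For the normal-play clause, I would argue that the Proviso is automatic: with $\A=\{0\}$ the only adorn is $0$, and the Common Normal Part already enforces enough structure on the option sets of $G$ and $H$ that a straightforward induction on the rank of $G+X$ gives $o_L(G+X)\ge o_L(H+X)$ for every left-atomic $X$, and symmetrically on the right. This is the classical fact, recalled in Section~3, that Maintenance is equivalent to $G\ge H$ in normal-play. The two dicot clauses (bullets~2 and~4) exploit that in a dicot universe every left-atomic game is also right-atomic, because dicot subpositions either have both kinds of options or none; therefore the only atomic distinguishers in $\U$ are the purely atomic constants $\adr{a}$. Adding $\adr{a}$ merely shifts $o_L$ and $o_R$ by a common predictable amount coming from $\A$, so the Proviso collapses to the single outcome comparison $o(G)\ge o(H)$.

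The harder cases, and the ones that I expect to carry the bulk of the technical work, are free mis\`ere-play (bullet~3) and guaranteed scoring-play (bullet~5). For free mis\`ere the side condition $H^\Lc=\atom{0}\Rightarrow G^\Lc=\atom{0}$ together with its conjugate is tailored to block the only root-level obstruction to the Proviso: a left-atomic summand $X$ could otherwise force Left to have no move in $H+X$ while still leaving her a move in $G+X$, or vice versa; the stated implications rule this out at the root, and CNP iteratively handles the deeper positions reached inside $X$. For guaranteed scoring the pass-allowed outcomes $\underline{o}_L, \overline{o}_R$ are constructed in \cite{LNS} precisely to encode the worst-case effect of a left- or right-atomic summand, so the inequalities $\underline{o}_L(G)\ge \underline{o}_L(H)$ and $\overline{o}_R(G)\ge \overline{o}_R(H)$ translate directly into the two halves of the Proviso. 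The main obstacle in both of these cases will be verifying that a root-level hypothesis combined with CNP suffices to control the full Proviso inequalities against every atomic $X\in \U$; I expect this to come down to an induction on the rank of $X$, with the base case handled by the purely atomic $\adr{a}$ as in the dicot argument and the inductive step fed by the Common Normal Part.
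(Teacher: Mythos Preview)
The paper does not actually prove this corollary: it is quoted in Appendix~A directly from \cite{LNS} with no accompanying argument, so there is no ``paper's own proof'' to compare against. That said, your high-level strategy---invoke the Basic Order of Games Theorem and reduce each bullet to a Proviso-verification---is exactly the route taken in \cite{LNS}, and it is consistent with the scattered remarks in the present paper (the footnote in Section~3 that in normal-play the Proviso is implied by Maintenance, and the sentence before Example~\ref{ex:1} that in dicot mis\`ere the Proviso collapses to $o(G)\ge o(H)$ because the only atomic games are purely atomic).

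Your handling of the normal-play and the two dicot bullets is essentially complete; the observation that in a dicot universe every left-atomic game is purely atomic, together with the shift $o_L(G+\adr{a})=o_L(G)+a$ in scoring (and the triviality $\adr{a}=\adr{0}$ in mis\`ere), is all that is needed there. For free mis\`ere and guaranteed scoring you correctly identify where the real work lies, but your sketches stop short of a proof. In free mis\`ere the left-atomic $X$ range over all $\cgs{\atom{0}}{\XR}$, not just $\adr{0}$, and one must argue---by induction using the CNP together with the root implications---that the Proviso holds against every such $X$; your description of the obstruction is slightly garbled (in mis\`ere it is \emph{good} for Left to be unable to move, so the danger is $H+X$ left-atomic while $G+X$ is not), though the stated implication $H^\Lc=\atom{0}\Rightarrow G^\Lc=\atom{0}$ is indeed precisely what blocks this. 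For guaranteed scoring, the equivalence between the pass-allowed outcome inequalities and the Proviso is a nontrivial lemma in \cite{LNS}, not an immediate translation. In short: right plan, right decomposition, but the last two bullets remain at the level of a proof outline rather than a proof.
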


\section*{Appendix B}
One of the benefits of the Left Provisonal Game is that it allows for game comparison in \emph{any} \Abs\ in CG-suit. We attach code for version CG-suit 0.7 (coded by C. Santos). The procedure CompareDM requires input Left Provisonal Game as a pair of literal form (dicot mis\`ere-play) games. We begin by illustrating how to run the below code. 

\begin{verbatim}
EXAMPLE:

G=literally({0,*|*})
H=literally({0,*|{0|0,*}})
CompareM([G,H])
\end{verbatim}
\begin{verbatim}
Moutcome:=proc (G)
local a,b,c,j,w,k,l,r,i;
option remember;

l:=LeftOptions(G);
r:=RightOptions(G);
b:=Length(l);
c:=Length(r);

if (G==0) then
k:=11;
fi;

if (G!=0) then
j:=0;
for i from 1 to b do
if (Moutcome(l[i])==0 or Moutcome(l[i])==1) then j:=1; fi;
od;

w:=0;
for i from 1 to c do
if (Moutcome(r[i])==0 or Moutcome(r[i])==-1) then w:=1; fi;
od;

if (j==0 and w==0) then k:=0; fi;
if (j==0 and w==1) then k:=-1; fi;
if (j==1 and w==0) then k:=1; fi;
if (j==1 and w==1) then k:=11; fi;
fi;

return k;
end;
\end{verbatim}
\begin{verbatim}
Dual := proc (pos)
local l,r,l1,r1,l2,r2,ll1,ll2,rr1,rr2,i,aux;
option remember;

l := [];
r := [];

l1 := LeftOptions(pos[1]);
r1 := RightOptions(pos[1]);

l2 := LeftOptions(pos[2]);
r2 := RightOptions(pos[2]);

ll1:=Length(l1);
rr1:=Length(r1);
ll2:=Length(l2);
rr2:=Length(r2);

for i from 1 to rr1 do
aux:=[r1[i],pos[2]];
Add(r,Dual(aux));
od;

for i from 1 to ll2 do
aux:=[pos[1],l2[i]];
Add(r,Dual(aux));
od;

for i from 1 to ll1 do
if (Moutcome(l1[i])==1) then
aux:=[l1[i],pos[2]];
Add(l,Dual(aux));
fi;
if (Moutcome(l1[i])==11 and (Moutcome(pos[2])==11 or Moutcome(pos[2])==-1)) then
aux:=[l1[i],pos[2]];
Add(l,Dual(aux));
fi;
if (Moutcome(l1[i])==0 and (Moutcome(pos[2])==0 or Moutcome(pos[2])==-1)) then
aux:=[l1[i],pos[2]];
Add(l,Dual(aux));
fi;
if (Moutcome(l1[i])==-1 and Moutcome(pos[2])==-1)
then
aux:=[l1[i],pos[2]];
Add(l,Dual(aux));
fi;
od;


for i from 1 to rr2 do
if (Moutcome(pos[1])==1)
then
aux:=[pos[1],r2[i]];
Add(l,Dual(aux));
fi;
if (Moutcome(pos[1])==11 and (Moutcome(r2[i])==11 or Moutcome(r2[i])==-1))
then
aux:=[pos[1],r2[i]];
Add(l,Dual(aux));
fi;
if (Moutcome(pos[1])==0 and (Moutcome(r2[i])==0 or Moutcome(r2[i])==-1))
then
aux:=[pos[1],r2[i]];
Add(l,Dual(aux));
fi;
if (Moutcome(pos[1])==-1 and Moutcome(r2[i])==-1)
then
aux:=[pos[1],r2[i]];
Add(l,Dual(aux));
fi;
od;


return {l | r};
end;
\end{verbatim}
\begin{verbatim}
CompareDM := proc (pos)
local l,r,l1,r1,l2,r2,ll1,ll2,rr1,rr2,i,a,b,s;
option remember;

l := [];
r := [];

l1 := LeftOptions(pos[1]);
r1 := RightOptions(pos[1]);

l2 := LeftOptions(pos[2]);
r2 := RightOptions(pos[2]);

ll1:=Length(l1);
rr1:=Length(r1);
ll2:=Length(l2);
rr2:=Length(r2);

a:=0; b:=0;


if ((Moutcome(pos[1])==1) or (Moutcome(pos[1])==11 and 
(Moutcome(pos[2])==11 or Moutcome(pos[2])==-1)) or 
(Moutcome(pos[1])==0 and (Moutcome(pos[2])==0 or 
Moutcome(pos[2])==-1)) or (Moutcome(pos[1])==-1 and 
Moutcome(pos[2])==-1)) and (Dual(pos)>=0) then 
a:=1; 
fi;

if ((Moutcome(pos[2])==1) or (Moutcome(pos[2])==11 and 
(Moutcome(pos[1])==11 or Moutcome(pos[1])==-1)) or 
(Moutcome(pos[2])==0 and (Moutcome(pos[1])==0 or 
Moutcome(pos[1])==-1)) or (Moutcome(pos[2])==-1 and 
Moutcome(pos[1])==-1)) and (Dual([pos[2],pos[1]])>=0) then 
b:=1; 
fi;

if (a==1) and (b==1) then s:="G=H"; fi;
if (a==1) and (b==0) then s:="G>H"; fi;
if (a==0) and (b==1) then s:="G<H"; fi;
if (a==0) and (b==0) then s:="G<>H"; fi;

return s;
end;
\end{verbatim}

\bibliographystyle{plain}
\bibliography{games4}

\end{document}